\documentclass{article}
\usepackage{graphicx} 
\usepackage{amsfonts}
\usepackage{authblk}
\usepackage{amsmath}
\usepackage{amssymb}
\usepackage{amsthm}
\usepackage[english]{babel}
\usepackage{mathtools}
\usepackage{mathpazo}
\usepackage{tikz-cd}
\usepackage[all]{xy}
\title{Twisted cohomology on algebraic and analytic varieties}
\author[1]{Md. Shariful Islam}
\author[2]{Abdur-Rahman Mishkaat}
\affil[1,2]{Department of Mathematics, University of Dhaka}
\date{May, 2026}

\begin{document}
\theoremstyle{definition}
\newtheorem{definition}{Definition}[section]
\newtheorem{theorem}{Theorem}
\theoremstyle{remark}
\newtheorem*{remark}{Remark}
\maketitle

\section{Introduction}
On a smooth manifold $M$, we know that by de Rham's theorem that the singular cohomology of $M$ can be computed from the cohomology of differential forms on $M$. More precisely, from the cochain complex of differential forms $(\Omega^{\bullet}(M),d:\Omega^{k}(M)\longrightarrow \Omega^{k+1}(M))$, we compute the cohomomology following the usual procedure. In this article, we are interested in a rather deformed or \textit{twisted} version of the de Rham complex and it's cohomology. For any closed 1-form $\omega\in\Omega^{1}(M)$, we may consider the twisted exterior derivative 
\[
d_{\omega}:= d + \omega\wedge
\]
where the closedness of $\omega$ ensures that $d_{\omega}^{2} =0$. Thus, we have 
\begin{definition}
The cohomology of the twisted de Rham complex $(\Omega^{\bullet}(M),d_{\omega}^{k}: \Omega^{k}(M)\longrightarrow\Omega^{k+1}(M))$ is called the twisted cohomology of $M$
\end{definition}
\begin{equation}
    \operatorname{H}_{\omega}^{k}(M) := \frac{\operatorname{Ker}(d_{\omega}^{k})}{\operatorname{Im}(d_{\omega}^{k-1})}
\end{equation}Such cohomologies have been studied before in several different contexts \cite{monnier2005cohomology},\cite{ida2012basic},\cite{ornea2009morse}. They were typically called \textit{Morse-Novikov} cohomology or \textit{Lichnerowicz} cohomology. On smooth manifolds that are foliated such cohomologies have some properties like homotopy invariance and Poincare duality leafwise (leaves of the foliation) that is not true for ordinary de Rham cohomology groups. Such a case is studied in \cite{islam2019leafwise,islam2024morse}. However, for our narrowed interest towards it's connection with analytic and algebraic structural deformations, we are tempted to call it \textit{twisted} cohomology.\\
Now let us proceed to specify exactly which type manifolds we are interested in here in this article. As can be guessed from the title, we restrict our emphasis to varieties that are both (complex) analytic and algebraic in nature. We shall define the algebraic version of the twisted cohomologies through deforming the algebraic de Rham complex with a Kahler differential. One of our motivations for doing so was the work done in \cite{meng2018explicit,meng2020morse,meng2023morse}, that was done on complex manifolds. The author of those papers deformed (or we can say "\textit{twisted}", as in our terminology) the Dolbeault complex with a $\bar{\partial}$-closed $(0,1)$-form $\eta$ as $\bar{\partial}\longrightarrow \bar{\partial}_{\eta}:= \bar{\partial} + \eta\wedge$, where the $\bar{\partial}$-closedness of $\eta$ again is used to show the nilpotency of $\bar{\partial}_{\eta}$. We recall the definition from \cite{meng2020morse} here 
\begin{definition}
    On a complex manifold $X$, the cohomology of the (twisted) complex $(\Omega^{\bullet,\bullet}(X),\bar{\partial}_{\eta})$ is called the Dolbeault-Morse-Novikov cohomology. 
\end{definition}
Yet, in our terminology, we shall call it the twisted Dolbeault cohomology. Parallel to our language choice and for later convenience we give the following nomenclature 
\begin{definition}
    Any closed differential form $\psi$ that is used to deform the differential operator or derivative of a (co)chain complex $(\Theta^{\bullet},\delta)$ to $(\Theta^{\bullet},\delta_{\psi})$ is called a \textit{twisting} parameter.   
\end{definition}
Thus the $\eta\in\Omega^{0,1}(X)$, $\omega\in\Omega^{1}(M)$ that we mentioned above are the twisting parameters of the corresponding chain complex.\\
We want to define such twisted cohomologies on an algebraic variety and compare it with the twisted cohomologies studied for the analytic cases in the literature. Such cohomologies on were already studied in the context complex algebraic geometry in \cite{kita1994vanishing}. However, there are some differences between the twisting parameters in our study and those in \cite{kita1994vanishing}, where logarithmic aspects of twisted cohomology is much more widely discussed and studied.  As we shall see, the existence of an algebraic twisting parameter on an algebraic variety is tied to stringent constraints that must be imposed on the variety. Moreover, in the complex analytic case, twisted cohomologies are seen to be tied to holomorphic deformations of the complex structure on the complex manifold. This is connected to the fact that the Dolbeault cohomology has some \textit{sensitivity} towards the complex structures. But the overall de Rham cohomology obtained from the direct sum of the Dolbeault pieces has no such \textit{sensitivity} and is entirely topological.\\
\textit{ The organization of the paper}: In section we first reviewed GAGA results related to cohomologies and Dolbeault isomorphism. Then we highlighted two different approaches of defining twisting parameters. We included some example computations that are at the \textit{homework} level difficulty. In section three, we discussed the weight sheaves and a theorem of Deligne. And then in the fourth section we describe algebraic twists and have included some easy computations. 
\section{GAGA review} Before defining the twisted algebraic cohomology, we want to recall the constraints required for making comparisons of it with the ones defined for analytic varieties.  First of all, we must consider algebraic manifolds that are algebraic varieties with an underlying manifold structure equipped on the underlying space of the variety. Direct comparisons are available in the literature in the form of the so called GAGA theorems \cite{serre2021algebraic}. Henceforth we shall and algebraic variety by $X$ and the underlying smooth analytic manifold $X^h$.\footnote{We are using analytic varieties and analytic manifolds synonymously, although they are not entirely equivalent} GAGA theorem requires the variety $X$ to be projective. So on the analytic side, the manifold $X^{h}$ can be a smooth submanifold of a complex projective space. The GAGA analytification functor is essentially tied to the following map 
\begin{equation}
    \lambda: X^{h}\longrightarrow X
\end{equation}
which, when viewed from the topological lens, is a topology weakening map since the Zariski topology on the variety $X$ is much weaker than the smooth topology on $X^h$. So the twisted Dolbeault cohomology that we discussed above is a cohomology on $X^h$ 
\[
\operatorname{H}_{\eta}^{p,q}(X^{h})
\] 
Here's the first constraint we get from such considerations. While on the complex manifold $X^h$, we can take a $(0,1)$-form that is closed under the action of $\bar{\partial}$, we first of all have no algebraic differential forms (i.e. K\"{a}hler differentials) on a projective variety obtained from the algebraic exterior derivative of the global regular functions since all the global regular (algebraic) functions on a projective variety must be constant i.e. 
\[\operatorname{H}^{0}(X,\mathcal{O}_{X}) \simeq \mathbb{C}\]
on the complex side, the manifold $X^{h}$ being compact, Liouville's theorem implies that the global constant functions must be constant too. So, on the algebraic variety $X$, how do we get the twisting parameter? The natural solution to this stumbling block turns out to be equipping the structure of an abelian group on the projective variety $X$, that is, $X$ has to be \textbf{an abelian variety}.\\
Informally speaking, from the group structure of the variety (we are working over $\mathbb{C}$), we may define a (Zariski) tangent space at the point that is also the identity element from the group theoretic perspective. The variety being a complex Lie group now, the tangent space at the identity is the Lie algebra. Then the dual tangent space i.e. the cotangent space has cotangent vectors that are the differential 1-forms on it.\\
Yet another way of justifying this at the complex dimension 1, for example, is to look at the Riemann surfaces of genus $g\geq 1$. If the genus is 1, we have an elliptic curve over the complex numbers; it is also a Calabi Yau one-fold. We shall study this case later on. From the algebraic viewpoint, this is a projective zero set of a homogeneous degree 3 polynomial. For higher genera, we call them hyper-elliptic curves. Under some special conditions, the hyper elliptic curves are algebraic too. Then the Riemann-Roch theorem implies that on a genus $g$ Riemann surface, there are precisely $g$ holomorphic 1-forms, that are closed.\\
Therefore, the twisting form to exist on the variety $X$, it must be an abelian variety.\\

\begin{remark} In fact, on any projective smooth variety we can define sheaf of K\'{a}hler differentials from the sheafification of the presheaf of cotangent modules on the underlying graded algebra. But it has no global section. The twisting parameter must be globally defined. Therefore, we shall restrict our attention to abelian varieties.  
\end{remark}

\subsection{Dolbeault isomorphism} Algebraic de Rham cohomology was defined in \cite{grothendieck1966rham,grothendieck1977cohomologie,grothendieck2005cohomologie,hartshorne1975rham} as the hyper cohomology of the algebraic de Rham complex 
\[
\operatorname{H}^{p}(X/k) := \operatorname{\textbf{H}}^{p}(X,\Omega^{\bullet})
\]
where the hypercohomology is reduced to cohomology of the chain complex for affine varieties. We recall that we cannot directly connect Dolbeault cohomology to the cohomologies on algebraic varieties. Because, on the complex manifold $X^h$ we have the decomposition of the differential forms under the action of complexification 
\begin{equation}
    \Omega^{k}(X^h) = \bigoplus_{p+q=k}\Omega^{p,q}(X^h)
\end{equation} 
which has no algebraic analog for algebraic differential forms on algebraic varieties. Rather, the identification is done via applying Dolbeault isomorphism combined with GAGA. Dolbeault theorem gives the sheaf cohomological isomorphism 
\begin{equation}
    \operatorname{H}_{\overline{\partial}}^{p,q}(X^h) \simeq \operatorname{H}^{q}(X^h,\Omega^{p,0}_{X^h})
\end{equation}
where on the right hand side, $\Omega_{X^h}^{p,q}$ is understood to be the sheaf of holomorphic $p$-forms on $X^h$. Then GAGA theorem gives the following identification 
\begin{equation}
    \operatorname{H}^{q}(X^h,\Omega^{p,0}_{X^h}) \simeq \operatorname{H}^{q}(X,\Omega^{p}_{X})
\end{equation}
where now on the right hand side, $\Omega^{p}_{X}$ is the sheaf of algebraic $p$-forms on the variety $X$.\\
However, there are some subtleties in applying GAGA for the twisted Dolbeault cohomology. Returning to Meng's definition \cite{meng2018explicit,meng2020morse,meng2023morse} of the twisted Dolbeault cohomology, the twisting parameter $\eta\in\Omega^{0,1}_{X^h}$ we may take into account the complex conjugation of any $(0,1)$-form is a $(1,0)$-form and vice versa.\\

On the variety $X$, let $\psi$ be a twisting parameter on $X$. Then GAGA theorem implies that on the analytic space $X^h$, there is a corresponding holomorphic 1-form $\psi^h\in \Omega_{X^h}^{1,0}$. Schematically  
\[
 \psi \overset{\text{GAGA}}{\longleftrightarrow} \psi^{h} 
\]
Now we describe below two ways of identifying the algebraic twisting parameter with the analytic twisting parameter:
\begin{enumerate}
    \item The complex conjugate of the holomorphic 1-form $\overline{\psi}^{h} = : \eta\in \Omega^{0,1}_{X^h}$ and define the twisted Dolbeault $\bar{\partial}_{\eta}$. The closedness of $\eta$ is implied by the algebraic closedness of the algebraic 1-form $\psi$.\ Hence we give 
    \begin{definition}
        The analytic twisting parameter $\eta\in\Omega_{X^h}^{0,1}$ obtained from the complex conjugation of the holomorphic 1-form $\psi^h$ that is again from an algebraic 1-form $\psi$ on the variety X, is called a \textit{conjugated} twisting parameter. 
    \end{definition}
    \item We contract the holomorphic vector valued Beltrami differential $\mu\in\Omega^{0,1}_{X^h}(T^{1,0}_{X^h})$ on the complex manifold $X^h$, with the analytic twisting parameter $\psi^{h}$. In local coordinates, the holomorphic vector valued (0,1) form may be written as 
    \begin{equation}
        \mu(z) = \mu_{\overline{j}}^{i}(z)d\overline{z}^{\bar{j}}\otimes \frac{\partial}{\partial z^{i}} 
    \end{equation}
    and the holomorphic twisting parameter is 
    \begin{equation}
        \psi^{h} = \psi_{k}^{(h)}dz^{k} \in \Omega^{1,0}_{X^h}
    \end{equation}
    Then the contraction can be performed as follows:
    \begin{align*}
        \psi^{h}(\mu) &= \langle \psi^{(h)}_{k}dz^{k}, \mu_{\overline{j}}^{i}(z)d\overline{z}^{\bar{j}}\otimes \frac{\partial}{\partial z^{i}}  \rangle\\
        &=  \psi^{(h)}_{k}\mu_{\overline{j}}^{i}(z) \langle dz^{k}, \frac{\partial}{\partial z^{i}}  \rangle d\overline{z}^{\bar{j}}\\
        &=  \psi^{(h)}_{k}\mu_{\overline{j}}^{i}(z) \delta^{k}_{i} d\overline{z}^{\bar{j}}\\
        &=  \psi^{(h)}_{i}\mu_{\overline{j}}^{i}(z) d\overline{z}^{\bar{j}} \in \Omega^{0,1}_{X^h}\\    
    \end{align*}
    We restrict our attention to the $\mu$'s that satisfy  the Maurer-Cartan equation 
    \begin{equation}
        \overline{\partial}\mu + \frac{1}{2}[\mu,\mu] =0
    \end{equation} However, to ensure the closedness of (0,1)-form obtained from the contraction as shown above, we see that the (Frolicher-Nijenhuis) bracket term $[\mu,\mu]$ term is an obstruction. Denoting the (0,1)-form as $\eta$ again, taking the anti-holomorphic derivative 
    \begin{align*}
        \overline{\partial}\eta_{\bar{j}} &= \overline{\partial}(\psi^{(h)}_{i}\mu^{i}_{\bar{j}})d\bar{z}^{\bar{j}}\\
        &= \psi^{(h)}_{i}(\bar{\partial}\mu_{\bar{j}}^{i})d\bar{z}^{\bar{j}}\\  &= \psi^{(h)}_{i}(-\frac{1}{2}[\mu,\mu]_{\bar{j}}^{i})d\bar{z}^{\bar{j}}\\ 
    \end{align*}
    Thus, the closedness of the analytic twisting parameter $\eta$ is equivalent to the condition 
    \begin{equation}
        [\mu,\mu] = 0
    \end{equation} 
    This motivates the following 
    \begin{definition}
        An analytic twisting parameter $\eta\in\Omega_{X^h}^{0,1}$ obtained from contracting a holomorphic vector valued (0,1)-form $\mu$ such that $[\mu,\mu]=0$, with the holomoprhic 1-form $\psi^{h}\in\Omega_{X^{h}}^{1,0}$ obtained again from an algebraic twisting parameter $\psi$ on an algebraic abelian variety $X$ is called a \textit{contracted} twisting parameter. 
    \end{definition}
\end{enumerate} 
\subsubsection{Conjugated twisting parameter}
As a remark on the first way of identifying the algebraic twisting parameter with the analytic one, we mention some features. First of all, the conjugated twisting parameter is obtained \textit{indirectly}. We will later see a different way of viewing the twisting cohomologies via line bundles. Namely, the twisted cohomology is obtained from a flat line bundle whose connection 1-form is precisely the twisting parameter, where the flatness of the (connection of the) line bundle is equivalent to the nilpotency of the twisted derivative. This way of studying twisted cohomologies has been studied by several authors cf \cite{ornea2009morse,ornea2010locally}, for example. The line bundle in this case is typically called \textit{weight bundle}. We may recall from the literature that for a pair $(X,X^h)$ with the analytification map $\lambda: X^h \longrightarrow X$ the associated Picard varieties that parametrize degree zero (i.e. first Chern vanishing) algebraic line bundles (equivalently, locally free sheaf of $\mathcal{O}_{X}$-modules of rank 1) over $X$ and holomorphic line bundles of degree zero over $X^h$ i.e. $\operatorname{Pic}^{0}(X)$ and $\operatorname{Pic}^{0}_h(X^h)$ are identified. Yet another hint of the existence of algebraic twisted cohomology closely related to the twisted Dolbeault cohomology. We will come back to this perspective later.\\
For two cohomologous (in the sense of Dolbeault cohomology on $X^h$) we have the following 
\begin{theorem} For two cohomologous analytic twisting parameter $\psi^{h}_{1},\psi^{h}_{2}\in\Omega^{0,1}_{X^h}$, the two twisted Dolbeault cohomologies are isomorphic 
\begin{equation}
    \operatorname{H}^{p,q}_{\overline{\partial}_{\psi^{h}_{1}}}(X^h) \simeq \operatorname{H}^{p,q}_{\overline{\partial}_{\psi^{h}_{2}}}(X^h)
\end{equation}
\end{theorem}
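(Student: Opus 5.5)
The plan is to write $\psi^{h}_{2} = \psi^{h}_{1} + \overline{\partial} f$ for a smooth function $f$ on $X^h$ and conjugate one twisted operator into the other by multiplication with $e^{f}$. Being cohomologous in the Dolbeault sense means exactly that such an $f\in\Omega^{0,0}(X^h)=C^{\infty}(X^h)$ exists, since both $\psi^{h}_{i}$ are $\overline{\partial}$-closed $(0,1)$-forms. Both twisted operators $\overline{\partial}_{\psi^{h}_{i}} = \overline{\partial} + \psi^{h}_{i}\wedge$ are then nilpotent by the closedness discussion preceding the statement, so both cohomologies are defined.

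The candidate isomorphism is
\[
\Phi:\Omega^{p,q}(X^h)\longrightarrow\Omega^{p,q}(X^h),\qquad \Phi(\alpha) = e^{-f}\alpha .
\]
This map is bidegree preserving and invertible, with inverse multiplication by $e^{f}$, which is again smooth and nowhere vanishing. The key computation is the intertwining identity. By the Leibniz rule,
\[
\overline{\partial}_{\psi^{h}_{2}}(e^{-f}\alpha) = e^{-f}\bigl(\overline{\partial}\alpha - \overline{\partial}f\wedge\alpha + \psi^{h}_{1}\wedge\alpha + \overline{\partial}f\wedge\alpha\bigr),
\]
and the right-hand side equals $e^{-f}\,\overline{\partial}_{\psi^{h}_{1}}\alpha$. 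Hence $\overline{\partial}_{\psi^{h}_{2}}\circ\Phi = \Phi\circ\overline{\partial}_{\psi^{h}_{1}}$, so $\Phi$ is an isomorphism of cochain complexes $(\Omega^{p,\bullet}(X^h),\overline{\partial}_{\psi^{h}_{1}})\to(\Omega^{p,\bullet}(X^h),\overline{\partial}_{\psi^{h}_{2}})$ for each fixed $p$. Because a chain isomorphism maps cocycles to cocycles and coboundaries to coboundaries bijectively, it induces the claimed isomorphism on $\operatorname{H}^{p,q}$.

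The only point needing care, and the place I expect the main obstacle, is the sign convention. With $d_{\omega} = d+\omega\wedge$ as in the paper, the gauge factor must be $e^{-f}$ when $\psi_2=\psi_1+\overline{\partial}f$, and I would check this sign directly via the Leibniz rule above. I would also remark that $f$ must be allowed to be complex valued and only smooth, not holomorphic. On compact $X^h$ a holomorphic $f$ would be constant and give nothing, so the gauge transformation lives genuinely in the smooth category. This is also why the statement holds verbatim with $e^{f}$ viewed as a trivialization changing the connection form of the weight line bundle mentioned just before the statement.
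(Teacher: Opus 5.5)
Your proof is correct and is essentially the paper's Proof 2: multiplication by an exponential intertwines $\overline{\partial}_{\psi^{h}_{1}}$ and $\overline{\partial}_{\psi^{h}_{2}}$, so it is an isomorphism of cochain complexes. The difference is that you take a global smooth potential $f$ with $\psi^{h}_{2}-\psi^{h}_{1}=\overline{\partial}f$, which makes the paper's local-to-global step and its detour through zeros of $f$ and logarithmic forms unnecessary. Your sign is the right one: the paper's Proof 2 writes $\psi^{h}_{1}=\psi^{h}_{2}+\overline{\partial}r$ and therefore should multiply by $e^{r}$, as its Proof 1 does with $\alpha\mapsto f\alpha$, rather than by $e^{-r}$.
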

\textit{Proof 1}: For two $\overline{\partial}$-closed $(0,1)$-forms $\psi^{h}_{1},\psi^{h}_{2}$ that differ by an $\overline{\partial}$-exact form 
\begin{equation}
\psi^{h}_{1} = \psi^{h}_{2} + \frac{\overline{\partial}f}{f}
 \end{equation}
where $f$ is some locally defined smooth complex valued function on some open set $U$ of the manifold $X^{h}$, where the locality of the domain of the function $f$ follows from the Grothendieck-Poincar\'e lemma. Because of the compactness of $X^{h}$, globally non vanishing holomorphic functions on $X^{h}$ are constant functions. So, we will be comparing the groups $H^{p,q}_{\psi^{h}_{1}}(U)$, $H^{p,q}_{\psi^{h}_{2}}(U)$. Clearly for such $\psi^{h}$'s the twisted operators are
\begin{equation}
    \overline{\partial}_{\psi^{h}_{1}}:= \overline{\partial} + \psi^{h}_{1}\wedge
\end{equation}
\begin{equation}
    \overline{\partial}_{\psi^{h}_{2}}:= \overline{\partial} + \psi^{h}_{2}\wedge
\end{equation}
which are related as 
\begin{equation}
    \overline{\partial}_{\psi^{h}_{1}} = \overline{\partial}_{\psi^{h}_{2}} + \frac{\overline{\partial}f}{f}\wedge
\end{equation}
We now define the map 
\[
\Phi: H^{p,q}_{\psi^{h}_{1}}(U)\rightarrow H^{p,q}_{\psi^{h}_{2}}(U)
\]
as $\Phi([\alpha]):= [f\alpha]\in H^{p,q}_{\psi^{h}_{2}}(U)$
For two cohomologous $(p,q)$-forms $\alpha_{1},\alpha_{2}\in H^{p,q}_{\psi^{h}_{1}}(U)$, they differ by $\overline{\partial}_{\psi^{h}_{1}}\gamma$ where $\gamma\in\Omega^{p,q-1}(U)$.\\
Now
\[
\Phi([\alpha_{1}-\alpha_{2}]) = [f\overline{\partial}_{\psi^{h}_{1}}\gamma]
\]
\begin{align*}
\Rightarrow [f(\alpha_{1}-\alpha_{2})] &=[(\overline{\partial}_{\psi^{h}_{1}})(\gamma f) + \gamma\wedge \overline{\partial}f]\\ &=[( \overline{\partial}_{\psi^{h}_{2}} + \frac{\overline{\partial}f}{f}\wedge )(\gamma f) + \gamma\wedge \overline{\partial}f]
\end{align*}
\[
\Rightarrow [f\alpha_{1}] - [f\alpha_{2}] =[\overline{\partial}_{\psi^{h}_{2}}(\gamma f)] 
\]
since the classes $[f\alpha_{1}]$, $[f\alpha_{2}]$ differ by a $\overline{\partial}_{\psi^{h}_{2}}$-exact form, we conclude that the class $[f\alpha]\in H^{p,q}_{\psi^{h}_{2}}$ and the map $\Phi$ is a well defined homomorphism.\\
Now we check the bijectivity of this map.\\
Let us pick a cohomology class $[\alpha]\in H^{p,q}_{\psi^{h}_{1}}$ for which $\Phi([\alpha]) = [f\alpha]=0$. This means that the representative of this class is itself a $\overline{\partial}_{\psi^{h}_{2}}$-exact form. So there must be a $(p,q-1)$-form $\theta\in \Omega^{p,q-1}(U)$ such that
\begin{align*}
f\alpha &= \overline{\partial}_{\psi^{h}_{2}}\theta\\
&= \overline{\partial}\theta + \psi^{h}_{2}\wedge\theta\\
&= \overline{\partial}\theta + (\psi^{h}_{1}- \frac{\overline{\partial} f}{f})\wedge\theta\\
&= \overline{\partial}\theta +\psi^{h}_{1}\wedge\theta - \frac{\overline{\partial} f}{f}\wedge\theta\\
\end{align*}
\begin{align*}
\Rightarrow \alpha &= \frac{f\overline{\partial}\theta-\overline{\partial} f\wedge\theta}{f^{2}}+\frac{\psi^{h}_{1}\wedge\theta}{f}\\
&= \overline{\partial}(\frac{\theta}{f}) + \psi^{h}_{1}\wedge(\frac{\theta}{f})\\
&=\overline{\partial}_{\psi^{h}_{1}}(\frac{\theta}{f})\\
\end{align*}
saying that the representative $\alpha$ is itself a $\overline{\partial}_{\psi^{h}_{1}}$-exact form, so the cohomology class $[\alpha] = 0$ in the group $H^{p,q}_{\psi^{h}_{1}}(U)$. Thus $Ker(\Phi)= {0}$ and the map $\Phi$ is injective.\\
Now let us pick another class $[\beta]\in H^{p,q}_{\psi^{h}_{2}}(U)$ where the representative $\beta\in\Omega^{p,q}(U)$ is $\overline{\partial}_{\psi^{h}_{2}}$-closed.
This means that 
\[
\overline{\partial}_{\psi^{h}_{2}}\beta = 0
\]
\begin{equation}
    \Rightarrow \overline{\partial}\beta = - \psi^{h}_{2}\wedge\beta 
\end{equation}
\\
Again from (11) we see that 
\begin{equation}
    \psi^{h}_{1}\wedge\frac{\beta}{f} = \psi^{h}_{2}\wedge\frac{\beta}{f} + \overline{\partial}f\wedge\frac{\beta}{f^{2}}
\end{equation}
It follows that 
\[
\overline{\partial}_{\psi^{h}_{1}}(\frac{\beta}{f}) = \overline{\partial}(\frac{\beta}{f}) + \psi^{h}_{1}\wedge\frac{\beta}{f}
\]
\[
= \frac{f\overline{\partial}\beta - \overline{\partial}f\wedge\beta}{f^{2}} + \psi^{h}_{1}\wedge\frac{\beta}{f}
\]
Using (15), (16)
\[
= -\frac{\psi^{h}_{2}\wedge\beta}{f} - \overline{\partial}f\wedge\frac{\beta}{f^{2}} + \psi^{h}_{1}\wedge\frac{\beta}{f} = 0
\]
which means that the $\overline{\partial}_{\psi^{h}_{1}}$-closed form $\frac{\beta}{f}$ represents a class in the group $H^{p,q}_{\psi^{h}_{1}}(U)$. Thus the arbitrarily chosen class $[\beta]$ is the image of the map $\Phi$ i.e. $\Phi([\frac{\beta}{f}]) = [\alpha]$. So the map $\Phi$ is sujective as well.\\
We conclude that (locally) we have the isomorphism 
\begin{equation}
    H^{p,q}_{\psi^{h}_{1}}(U)\simeq H^{p,q}_{\psi^{h}_{2}}(U)
\end{equation}
This isomorphism also holds globally for two cohomologous $\eta_1,\eta_2$ differing by a globally $\overline{\partial}$-exact (logarithmic) form. \textbf{QED}\\ 
We are almost done with this proof here except a subtle remark that must be mentioned: If we look at equation (11) more closely, we see that the manifold $X^h$ \textit{compactified} with divisors with normal crossings. That is, the divisors $D_1 = \cdot\cdot\cdot=D_j=0$ where the function $f$ is vanishing. This is also called Hironaka compactification. This further restricts the algebraic twisting parameter. We may consider the algebraic twisting parameter on the projective variety $X$ (even if we forget about its \textbf{abelian group structure}) to be a logarithmic 1-form. Then, it is naturally implied that the projective variety is already Hironaka compactified with algebraic divisors with normal crossings.\\
We present another simpler proof. The idea is to use the principle that if the chain map is invertible between two chain complex, then the cohomologies of the two chain complexes are isomorphic and vice versa.\\

\textit{Proof 2}: Starting with the cohomologous analytic twisting parameters 
\begin{equation}
\psi^{h}_{1} = \psi^{h}_{2} + \frac{\overline{\partial}f}{f} =  \psi^{h}_{2} + \overline{\partial} \operatorname{log}f
 \end{equation}
Consider the function $r$ such that 
\[
e^{r} = f
\]
Define the map 
\begin{equation}
    T: \theta \mapsto e^{-r}\theta
\end{equation}
Now we compute 
\begin{align*}
    \overline{\partial}_{\psi_{2}^{h}}(T(\theta)) &= (\overline{\partial} + \psi^{h}_{2}\wedge)(e^{-r}\theta)\\
    &= \overline{\partial}(e^{-r}\theta) + \psi_{2}^{h}\wedge(e^{-r}\theta)\\
    &= -e^{-r}\overline{\partial}r \wedge \theta + e^{-r}\overline{\partial}\theta + e^{-r}\psi_{2}^{h}\wedge \theta\\
    &= -e^{-r}\overline{\partial}\operatorname{log}f \wedge \theta + e^{-r}\overline{\partial}\theta + e^{-r}\psi_{2}^{h}\wedge \theta\\
    &= e^{-r}(\overline{\partial}\theta + [\psi^{h}_{2}-\overline{\partial}\operatorname{log}f]\wedge \theta )\\
    &= e^{-r}(\overline{\partial}\theta + \psi^{h}_{1}\wedge \theta)\\ 
    &= T(\overline{\partial}_{\psi^{h}_{1}}\theta)
\end{align*}Thus the map (19) is a chain map. Under the assumption that $f$ is nowhere zero, we can invert this map. Then it is an isomorphism between the chain complexes associated with the analytic twisting parameters $\psi^{h}_{1,2}$ and hence the cohomologies are isomorphic. \textbf{QED}\\

However, since we are considering $X$ to be an abelian variety, under Hironaka compactification the algebraic twisting parameter itself then becomes logarithmic and hence we can use tools from logarithmic cohomology theory of algebraic varieties \cite{deligne2006equations}. So theorem 1 gave us an important hint that logarithmic structures must be equipped on the abelian variety $X$. So we briefly recall some facts and examples in the next subsection (the reader familiar with this technology may skip it).
\subsubsection{Logarithmic cohomology} For an algebro geometric introduction to logarithmic geometry, probably the most comprehensive reference is the book \cite{ogus2018lectures} and the beautifully written \cite{deligne2006equations}. Below, we are considering schemes to be honest. Yet, we shall write "variety" instead of "scheme". To keep things with the flow of the previous sections above, we are adapting this abuse of notation.\\
Now suppose $D\subset X$ is a closed subvariety of $X$ which is a smooth divisor over the ring $R$. In terms of local coordinates $(t_1,...,t_n)$ on $X$, the divisor $D$ is assumed to be defined by $t_1 =\cdot\cdot\cdot = t_l= 0$.\footnote{This is generally known as a log-structure being equipped to a variety\cite{ogus2018lectures}.} So the $\mathcal{O}_{X/S}$-module alluded to above is seen to be generated by the differentials $dt_{1},...,dt_n$ (i.e. $\Omega_{X/S}^{1}= \langle dt_1,...,dt_n \rangle$), in terms of these coordinate variables.\\
Define a \textit{variant} of this module which is generated as 
\begin{equation}
    \Omega_{X/S}^{1}(\operatorname{log} D) = \langle \frac{dt_1}{t_1},...,\frac{dt_l}{t_l},t_{l+1},...,t_n \rangle
\end{equation}
Clearly the sections of this sheaf are singular along the divisor $D$ and the singularities being of order one, the poles are said to be \textit{logarithmic}. Thus this is called the sheaf of logarithmic differentials. The higher order logarithmic forms are obtained from taking wegde product: $\Omega_{X}^{k}(\operatorname{log} D) = \bigwedge^{k}\Omega_{X}^{1}(\operatorname{log} D)$. Then we can write the logarithmic de Rham complex $(\Omega_{X/S}^{\bullet}(\operatorname{log}D),d)$:
    \begin{equation}
    0\overset{}{\rightarrow}  \Omega_{X/S}^{0}(\operatorname{log}D) \overset{d^{0}}{\rightarrow} \Omega_{X/S}^{1}(\operatorname{log}D)\overset{d^{1}}{\rightarrow} \Omega_{X/S}^{2}(\operatorname{log}D)\overset{d^{2}}{\rightarrow}.... \overset{d^{n-1}}{\rightarrow}\Omega_{X/S}^{n}(\operatorname{log}D)\overset{d^{n}}{\rightarrow}0 
\end{equation}
As defined in \cite{deligne2006equations}, the hypercohomology of the logarithmic de Rham complex is the de Rham cohomology of the complement $X-D$: 
\begin{equation}
    \operatorname{H}_{dR}^{i}(X-D) := \textbf{H}^{i}(X,\Omega_{X/S}^{\bullet}(\operatorname{log}D))
\end{equation}
For computational convenience, it is useful to introduce an $R$-linear sheaf homomorphism 
\begin{equation}
    \operatorname{Res}: \Omega_{X/S}^{1}(\operatorname{log} D) \longrightarrow \mathcal{O}_D = \bigoplus_{j}(i_j)_{*}\mathcal{O}_{D_{j}}
\end{equation}
where $D= \sum_j D_j$ with the embeddings $i_j: D_j\hookrightarrow X$, known as the residue map.
This map acts as 
\begin{equation}
    f_1 \frac{dt_1}{t_1}+ ... +f_l \frac{dt_l}{t_l}+ f_{l+1}dt_{l+1}+...+f_ndt_n \mapsto f_{1}|_{t_{1}=0} + .... + f_{l}|_{t_{l}=0}
\end{equation}
from which it is obvious that we are essentially modding out the K\"{a}hler differentials from the logarithmic differentials while applying the residue map, hence we can write 
\begin{equation}
    \frac{\Omega_{X/S}^{1}(\operatorname{log}D)}{\Omega_{X/S}^{1}}  \simeq \mathcal{O}_{D}
\end{equation}
Hence we can form the following short exact sequence\footnote{ This is a straight forward generalization of the following short exact sequence from complex analysis:
\begin{equation}
    0\longrightarrow \mathbb{C}\{\{z\}\}dz\longrightarrow \mathbb{C}\{\{z\}\}\frac{dz}{z}\overset{Res}{\longrightarrow}\mathbb{C}\longrightarrow 0 
\end{equation}
where the first map is from the ring of holomorphic differentials to the meromorphic ones. The residue acts as 
\[
\operatorname{Res}: f(z)\frac{dz}{z} \mapsto f(0)
\]
} 
\begin{equation}
    0 \longrightarrow \Omega_{X/S}^{1} \longrightarrow \Omega_{X/S}^{1}(\operatorname{log}D)\overset{Res}{\longrightarrow}\underbrace{\mathcal{O}_{D}}_{\bigoplus_{j}(i_j)_{*}\mathcal{O}_{D_{j}} = \Omega_{D}^{0}}\longrightarrow0
\end{equation}
This can be lifted to higher rank differentials 
\begin{equation}
    0 \longrightarrow \Omega_{X/S}^{i} \longrightarrow \Omega_{X/S}^{i}(\operatorname{log}D)\overset{Res}{\longrightarrow}\Omega_{D}^{i-1}\longrightarrow0
\end{equation}
More generally, from  the short exact sequence of the complexes 
\begin{equation}
    0 \longrightarrow \Omega_{X/S}^{\bullet} \longrightarrow \Omega_{X/S}^{\bullet}(\operatorname{log}D)\overset{Res}{\longrightarrow}\Omega_{D}^{i\bullet-1}\longrightarrow0
\end{equation}
we can get the long exact sequence of hypercohomologies 
\begin{equation}
    0 \longrightarrow \operatorname{H}^{1}(X) \longrightarrow \operatorname{H}^{1}(X-D)\longrightarrow \operatorname{H}^{0}(D)\longrightarrow \operatorname{H}^{2}(X)\longrightarrow\cdot\cdot\cdot
\end{equation}
\subsubsection{An example for curves}
Let us narrow down our exploration to the case of relative dimension 1, so the variety $X$ over the ring $R$ has geometrically integral curves as it's fibres and the divisor D is given by a section. \footnote{We refer to the simple calculation \cite{poonen2020algebraic} which is inspired from \cite{katz1973p}}So now divisor\footnote{really an effective Cartier divisor} is defined from a single equation $t=0$. Let $\mathcal{I}_D$ be the corresponding ideal sheaf, which is generated by $1/t$, so is isomorphic to the line bundle $\mathcal{O}(D) = \langle 1/t \rangle$. Since we also have that $\Omega_{X}^{1}(\operatorname{log}D) = \langle dt/t \rangle, \Omega_X ^{1} = \langle dt \rangle$, it is obvious that 
\begin{equation}
    \Omega_{X}^{1}(\operatorname{log} D) \simeq \Omega_{X}^{1} \otimes_{\mathcal{O}_X} \mathcal{I}_D \simeq \Omega_{X}^{1} \otimes_{\mathcal{O}_X} \mathcal{O}(D) = : \Omega_{X}(D)
\end{equation}
Thus the logarithmic differentials on a curve are essentially obtained from twisting them with the line bundle associated to an effective Divisor of the curve.\\
Since $D$ is a section of the variety morphism $X \longrightarrow \operatorname{Spec}(R)$\footnote{Being a closed immersion of $\operatorname{Spec}(R)$ into the variety $X$, the induced subvariety is canonically isomorphic to $\operatorname{Spec}(R)$. So the section group $\operatorname{H}^{0}(D) = \operatorname{H}^{0}(\operatorname{Spec}(R))\simeq R$}, there is an isomorphism $\operatorname{H}^{0}(D)\simeq \operatorname{H}^{2}(X)\simeq R$. Thus from the long exact sequences of hypercohomologies (52), we get another isomorphism $\operatorname{H}^{1}(X)\simeq \operatorname{H}^{1}(X-D)$, or $\mathbf{H}^{1}(X, \mathcal{O}_{X}\overset{d}{\rightarrow}\Omega_{X}^{1}) \simeq \mathbf{H}^{1}(X, \mathcal{O}_{X}\overset{d}{\rightarrow}\Omega_{X}^{1}(D))$. So the inclusion of the complex $\mathcal{O}_{X}\overset{d}{\rightarrow}\Omega_{X}^{1}$ into $\mathcal{O}_{X}\overset{d}{\rightarrow}\Omega_{X}^{1}(D)$ is a quasi isomorphism.\\
Now if we further twist the sheaf of K\"{a}hler differentials $\Omega_{X}^{1}$ to $\Omega_{X}^{1}\otimes \mathcal{O}_{X}(2D) =: \Omega_{X}(2D) = \langle dt/t^2\rangle$, that has sections: meromorphic differentials with poles of order upto 2, from the following isomorphism 
\begin{equation}
    \frac{\mathcal{O}_{X}(D)}{\mathcal{O}_{X}} \simeq \frac{\Omega_{X}(2D)}{\Omega_{X}(D)}
\end{equation}
\[
\frac{1}{t}\mapsto -\frac{dt}{t^2}
\]
we see that the complex $d: \mathcal{O}_{X}\longrightarrow \Omega_{X}(D)$ can be quasi-isomorphically embedded into the complex $d: \mathcal{O}_{X}(D)\longrightarrow \Omega_{X}(2D)$, as the complex (54) can be seen as the quotient complex of these two complexes. Combining these two isomorphisms, we see that the cohomology reduces to 
\begin{equation}
    \operatorname{H}^{1}(X) = \textbf{H}^{1}(d:\mathcal{O}_{X}(D)\longrightarrow \Omega_{X}(2D))
\end{equation}
Which was shown to be $\operatorname{H}^{1}(X) \simeq \Gamma(X,\Omega(2D))$ for an ellpitic curve(\cite{poonen2020algebraic}).
\section{Weight sheaf} Here we mention another justification of our claim that there must logarithmic structure on the variety $X$ so that we can define algebraic twisting parameter on it in a way compatible (via the GAGA theorem)  with the analytic twisting parameter.\\
 To define the twisted Dolbeault cohomology, in \cite{meng2020morse} they define it using the soft resolution of the so called weight-$\eta$ sheaf $\mathcal{O}_{X^h,\eta}$ which is defined to  be the kernel of the sheaf homomorphism 
 \begin{equation}
     \overline{\partial}_{\eta}: \Omega_{X^h}^{p,0}\rightarrow \Omega_{X^h}^{p,1}
 \end{equation}
 where $\eta$ is the analytic twisting parameter i.e. $\eta\in \Omega_{X^h}^{0,1}$. Using the Grothendieck-Poincar\'{e} lemma the weight-$\eta$ sheaf $\mathcal{O}_{X^h,\eta}$ can be shown to be a locally free sheaf of rank 1 and then 
 \begin{equation}
     \Omega_{X^h,\eta}^{p} = \Omega_{X^h}^p \otimes_{\mathcal{O}_{X^h}} \mathcal{O}_{X^h,\eta} 
 \end{equation}
 \[
 = \Omega_{X^h}^p \otimes_{\mathcal{O}_{X^h}} \Omega_{X^h,\eta}^0
 \]
  As was noted in \cite{meng2020morse,meng2023morse} the sheaf $\Omega_{X^h, \eta}^{p}$ can be written locally as $e^{-u}\Omega_{X^h}^{p}$ (and $\mathcal{O}_{X^h,\eta} = e^{-u}\mathcal{O}_{X^{h}}$) using Grothendieck-Poincar\'{e} lemma \cite{serre1953faisceaux}. For a smooth complex valued 1-form on $X^h$ $\theta$, the generalized weight sheaf is then the $\theta$ weight sheaf defined as the kernel of the sheaf homomorphism
\begin{equation}
    d_{\theta}: \Omega_{X^h}^{0}\longrightarrow \Omega_{X^h}^{1}
\end{equation}
where $d_{\theta}\alpha = d\alpha + \theta \wedge \alpha$, that (following \cite{meng2020morse}) we denote as $\underline{\mathbb{C}}_{X^h,\theta}$. As before, because the 1-form $\theta$ can be written as the differential of a complex valued smooth function, $\theta = du$, we can write the weight sheaf as

\begin{equation}
\underline{\mathbb{C}}_{X^h,\theta} = \underline{\mathbb{C}}e^{-u}
\end{equation}
From the decomposition 
\begin{equation}
    \Omega^{1}_{X^h} \otimes \mathbb{C} =  \Omega^{1,0}_{X^h} \oplus \Omega_{X^{h}}^{0,1} 
\end{equation}
the complex 1-form can be split 
\begin{equation}
    \theta = \eta + \bar{\zeta}
\end{equation}
 where $\eta,\zeta \in \Omega_{X^h}^{1,0}$. Then $\mathcal{O}_{X^h,\eta}\cap \overline{\mathcal{O}_{X^h,\zeta}} = \underline{\mathbb{C}}_{X^{h},\theta}$.\\

 Now the sheaf (35) can be seen as the zeroth cohomology with coefficients in the (analytic) vector bundle (in this case line bundle with flat/integrable connection\cite{ornea2009morse})
 \begin{equation}
     \text{H}_{dR}^{0}(X^h,(\mathcal{V}^h,\nabla^h)) = \operatorname{ker}(\nabla^h) = \underline{\mathbb{C}}e^{-u}
 \end{equation}
 for some analytic connection $\nabla^h$ (somehow depending/corresponding to the form $\theta$) However, for the algebraic variety $X$ we have that 
 \begin{equation}
     \text{H}_{dR}^{0}(X^h,(\mathcal{V},\nabla)) = \operatorname{ker}(\nabla) = 0 
 \end{equation}
 As a final remark, we want to stress the fact that the weight $\eta$-sheaf $\mathcal{O}_{X,\eta}$ is essentially a line bundle, that we may denote as $L_{\eta}$ (being a locally free sheaf of rank 1) defined via the transition functions $e^{u_{i}-u_{j}}: U_{i}\cap U_{j}\longrightarrow \operatorname{GL}(1,\mathbb{C})\simeq \mathbb{C}^{\times}$ on non empty overlapping charts, where $\eta = \overline{\partial}u_{i}$ locally on $U_{i}\subseteq X$ via the Grothendieck Poincar\'{e} lemma. The Dolbeault-Morse-Novikov cohomology is the cohomology valued in this line bundle for which it must be equipped with a flat connection which in this case is the twisted derivative $\overline{\partial}_{\eta}$. This implies that the holomorphic structure on $L_{\eta}$ from the nilpotency. 

 \subsection{Recap on Connections} We review some aspects of connections\footnote{see for example \cite{dupuyintroduction} for a nice introduction} relative varieties for generality. Starting with 
 \begin{definition}
A variety $S$ and an $S$-variety $X$ i.e. there is a variety morphism $\pi: X \longrightarrow S$ and a quasi-coherent sheaf of $\mathcal{O}_X$-module $\mathcal{E}$, the $\pi^{-1}\mathcal{O}_{S}$-linear morphism
 \begin{equation}
     \nabla: \mathcal{E}\longrightarrow \mathcal{E}\otimes_{\mathcal{O}_{X}}\Omega_{X/S}^1
 \end{equation}
 is defined to be (an algebraic) connection on the sheaf $\mathcal{E}$.
 \end{definition}
 For any section $f\in \Gamma(X,\mathcal{O}_X)$ and $e\in \Gamma(X,\mathcal{E})$ it satisfies 
 \begin{equation}
     \nabla(fe) = e \otimes df + f \nabla e
 \end{equation}
 If the sheaf is further restricted to the class of locally free sheaf of finite rank of $\mathcal{O}_X$-modules, then we say that the pair $(\mathcal{E},\nabla)$ is a vector bundle with connection.\\
 We look at the example of a trivial bundle on $X$
 \begin{equation}
     \mathcal{E} = \bigoplus_{i=1}^{r}\mathcal{O}_{X}e_{i}
 \end{equation}
 of rank $r$ that is said to be trivialized by the sections (generators) $(e_1,...,e_r)$. Then we take the global sections of the K\"{a}hler differential sheaf $\Omega_{X/S}$, $\omega_{ij}\in \Gamma(X,\Omega_{X/S}^{1})$, we can write the following \textit{change of basis}
 \begin{equation}
     \nabla e_j = \sum_{i=1}^{r}e_i \otimes \omega_{ij}
 \end{equation}
 where $j=1,...,r$. Just as in classical differential geometry, the matrix of the K\"{a}hler differentials $\Omega= [\omega_{ij}]$ and \textit{symbolically} we can write 
 \begin{equation}
     \nabla = d + \Omega
 \end{equation}
 Since any section of $\mathcal{E}$ is an $\mathcal{O}_X$-linear combination of the generators i.e. $e = \sum f_{j}e_{j}$, the connection then acts on it as 
 \begin{equation}
     \nabla e = \sum_{i=1}^{r}e_{i}\otimes (df_{i} + \sum_{j=1}^{r}\omega_{ij}f_{j})
 \end{equation}
 A section acting on which the connection gives zero is called a horizontal section with respect to that connection: $e \in \Gamma(\mathcal{E}): \nabla e=0$. Thus, horizontal sections correspond to solutions $(f_1,...,f_r)$ of the linear differential equation
 \begin{equation}
     df_i + \sum_{j=1}^{r}\omega_{ij}f_{j} = 0
 \end{equation}
 Let's call it the associated differential equation of the connection.\\
 Now consider the Hironaka compactification of the variety $X$, which we denote as $\overline{X}$ that is we embed it inside a larger space $j: X \hookrightarrow \overline{X}$ such that $\overline{X}-X = D$ is a divisor with normal crossings. Then we have (from \cite{deligne2006equations}) the following 
 \begin{definition}
 A connection $\nabla$ is called \textbf{regular} if near every point of $D$, the solutions to the associated differential equations grow at most polynomially along paths approaching $D$. That is, if the connection $\nabla: \mathcal{E}\longrightarrow \mathcal{E}\otimes_{\mathcal{O}_{X}}\Omega_{X}^{1}$ extends to the connection 
 \begin{equation}
     \overline{\nabla}: \mathcal{E}\longrightarrow \mathcal{E}\otimes_{\mathcal{O}_{\overline{X}}}\Omega_{\overline{X}}^{1}(\operatorname{log} D)
 \end{equation}
 \end{definition}
 Which is to say that the connection has logarithmic singularities at worst. This is called Deligne extension of the connection.\\
 \textbf{Example}: Consider the connection $\nabla = d - \frac{\lambda}{t}dt$ on the (open) variety $X = \mathbb{A}_{\mathbb{C}}^{1}- \{0\} = \operatorname{Spec}(\mathbb{C}[t,t^{-1}])$ with the trivial line bundle $\mathcal{O}_{X} = \mathcal{E}$, where $\lambda\in \mathbb{C}$. The associated differential equation for a horizontal section $s$ i.e. $\nabla e =0$:
 \[
 ds = \frac{\lambda}{t}s dt
 \]
 whose solution is $s= kt^{\lambda}$ for some $k\in \mathbb{C}^{\times}$. This is connection has a singularity at $t=0$. However, as can be seen that as $t\longrightarrow0$, the solution $t^{\lambda}$ grows polynomially. So the singular point is a regular one. A similar analysis can be used to show that $\infty$ is also a singularity. We can compactify $X$ to get the complex projective space $\mathbb{CP}^{1}$.\\
 The reason for recalling regular connections on algebraic varieties with Hironaka compactification should be clear in the next section where we mention an important result by Pierre Deligne.
\subsection{A theorem of Deligne}
This theorem gave us another strong hint why we must equip the variety $X$ with logarithmic structure. We won't describe it's proof but only the statement of course. To set the stage, let us recall some notions.\\ 
Let $X$ be a smooth algebraic variety of dimension $n$ over the complex numbers. We define the sheaf of algebraic $p$-forms $\Omega_{X}^{p}$ as the $p$-times \textit{wedge product} of the sheaf of K\"{a}hler differential forms $\Omega_{X}^{1}$.
We can then consider the resolution of the constant sheaf $\underline{\mathbb{C}}$ 
\begin{equation}
    \underline{\mathbb{C}}\longrightarrow\Omega_{X}^{0}\longrightarrow\Omega_{X}^{1}\longrightarrow\Omega_{X}^{2}\longrightarrow \cdot\cdot\cdot
\end{equation}
and the hypercohomology of this complex is the algebraic de Rham cohomology \cite{grothendieck1966rham}
\begin{equation}
    \text{H}_{dR}^{*}(X) := \textbf{H}^{*}(\underline{\mathbb{C}}\longrightarrow\Omega_{X}^{0}\longrightarrow\Omega_{X}^{1}\longrightarrow\Omega_{X}^{2}\longrightarrow \cdot\cdot\cdot)
\end{equation}
We take into account the fact that for a complex of \textit{fine} sheaves, the cohomology of the complex of sections of the sheaves is equivalent to the hypercohomology of the complex of those sheaves in the case of the sheaf of the analytic differential forms of the associated analytic variety $X^{h}$ (see section 2) for which we have the complex 
\begin{equation}
    \underline{\mathbb{C}}\longrightarrow\Omega_{X^{h}}^{0}\longrightarrow\Omega_{X^{h}}^{1}\longrightarrow\Omega_{X^{h}}^{2}\longrightarrow \cdot\cdot\cdot
\end{equation}
and the corresponding complex to the underlying smooth manifold
\begin{equation}
\underline{\mathbb{R}}\longrightarrow\Omega_{X^{\infty}}^{0}\longrightarrow\Omega_{X^{\infty}}^{1}\longrightarrow\Omega_{X^{\infty}}^{2}\longrightarrow \cdot\cdot\cdot
\end{equation}
More precisely, the hypercohomology of this (resolution of $\underline{\mathbb{R}}$) complex is isomorphic to the cohomology of the complex 
\begin{equation}
\Gamma(X^{\infty},\Omega_{X^{\infty}}^{0})\longrightarrow\Gamma(X^{\infty},\Omega_{X^{\infty}}^{1})\longrightarrow\Gamma(X^{\infty},\Omega_{X^{\infty}}^{2})\longrightarrow\cdot\cdot\cdot
\end{equation}
since $\Omega_{X^{\infty}}^{p}$ are \textit{fine} sheaves.\\
Then we get a chain of isomorphisms 
\begin{equation}
    \text{H}_{dR}^{*}(X)\longrightarrow\text{H}_{dR}^{*}(X^{h})\longrightarrow\text{H}_{dR}^{*}(X^{\infty})
\end{equation}
where the first isomorphism is highly non trivial \cite{grothendieck1966rham}. However all these cohomologies are computed in constant coefficients. These can be powerfully generalized to non constant coefficient cases namely algebraic bundle valued cases.\\
Let $\mathcal{V}$ be an algebraic vector bundle (locally free sheaf of finite rank) on $X$ equipped with an integrable connection (i.e. \textit{flat})
\begin{equation}
    \nabla: \mathcal{V}\longrightarrow \Omega_{X}^{1}\otimes \mathcal{V}
\end{equation}
Using such a connection we may write the complex 
\begin{equation}
    \mathcal{V}\longrightarrow\Omega_{X}^{1}\otimes \mathcal{V}\longrightarrow\Omega_{X}^{2}\otimes \mathcal{V}\longrightarrow\cdot\cdot\cdot
\end{equation}
Thus we have the non constant coefficient cohomology as the hypercohomology of this complex 
\begin{equation}
    \text{H}_{dR}^{*}(X,(\mathcal{V},\nabla)) := \mathbf{H}^{*}(X,\mathcal{V}\longrightarrow\Omega_{X}^{1}\otimes \mathcal{V}\longrightarrow\Omega_{X}^{2}\otimes \mathcal{V}\longrightarrow\cdot\cdot\cdot)
\end{equation}
and the analytic counterparts 
\begin{equation}
    \mathcal{V}\longrightarrow\Omega_{X^{h}}^{1}\otimes \mathcal{V}\longrightarrow\Omega_{X^{h}}^{2}\otimes \mathcal{V}\longrightarrow\cdot\cdot\cdot
\end{equation}
\begin{equation}
    \text{H}_{dR}^{*}(X^h,(\mathcal{V},\nabla)) := \mathbf{H}^{*}(X^h,\mathcal{V}\longrightarrow\Omega_{X^h}^{1}\otimes \mathcal{V}\longrightarrow\Omega_{X^h}^{2}\otimes \mathcal{V}\longrightarrow\cdot\cdot\cdot)
\end{equation}
However, unlike (53), even though we do get a map 
\begin{equation}
     \text{H}_{dR}^{*}(X,(\mathcal{V},\nabla)) \longrightarrow   \text{H}_{dR}^{*}(X^h,(\mathcal{V},\nabla)) 
\end{equation}
but it's not an isomorphism. However, if we take another proper and smooth variety $\bar{X}$ such that $\bar{X}-X$ is a divisor with normal crossings we get a Hironaka compactification of the variety $X$. Then we can extend the algebraic vector bundle $(\mathcal{V},\nabla)$ to $(\bar{\mathcal{V}},\bar{\nabla})$ such that the extended connection $\bar{\nabla}$ has logarithmic poles along the divisor $\bar{X}-X$ with normal crossings. Then GAGA (see section 2) can be applied to the logarithmic cohomologies 
\begin{equation}
    \text{H}_{dR,log}^{*}(\bar{X},(\bar{\mathcal{V}},\bar{\nabla})) \longrightarrow \text{H}_{dR,log}^{*}(\bar{X^h},(\bar{\mathcal{V}}^h,\bar{\nabla}^h)) 
\end{equation}
Then these meromorphic de Rham cohomologies can be compared to the algebraic cohomologies with essential singularities at infinities, via the ramifications. This was precisely the strategy in \cite{deligne2006equations} and hence Deligne deduced the theorem\\ 

\begin{theorem}
For a regular connection\footnote{The connection has at worst regular singularities} $\nabla$, the cohomologies $\text{H}_{dR}^{*}(X,(\mathcal{V},\nabla))$ and $\text{H}_{dR}^{*}(X^h,(\mathcal{V}^h,\nabla^h))$ are isomorphic.\footnote{For a brief survey see \cite{andre2004comparison}}
\end{theorem}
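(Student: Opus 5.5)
The plan follows Deligne's strategy. First compactify, then compare on the compactification using GAGA and the logarithmic de Rham complex. Finally pass back to $X$ by showing that restriction from logarithmic to meromorphic forms along $D$ is a quasi-isomorphism on both the algebraic and the analytic side.

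Concretely, choose by Hironaka a smooth proper $\bar{X}$ with $j:X\hookrightarrow\bar{X}$ and $D=\bar{X}-X$ a divisor with normal crossings. Regularity of $\nabla$ means, by the definition recalled above, that $(\mathcal{V},\nabla)$ extends to a locally free $\bar{\mathcal{V}}$ on $\bar{X}$ with $\bar{\nabla}:\bar{\mathcal{V}}\to\Omega^{1}_{\bar{X}}(\operatorname{log}D)\otimes\bar{\mathcal{V}}$. I would fix the extension by requiring the eigenvalues of the residues along each $D_j$ to have real parts in $[0,1)$; this is Deligne's canonical extension. The proof then runs through the chain
\[
\text{H}_{dR}^{*}(X,(\mathcal{V},\nabla))\simeq\mathbf{H}^{*}(\bar{X},\Omega^{\bullet}_{\bar{X}}(*D)\otimes\bar{\mathcal{V}})\overset{(a)}{\simeq}\mathbf{H}^{*}(\bar{X},\Omega^{\bullet}_{\bar{X}}(\operatorname{log}D)\otimes\bar{\mathcal{V}})\overset{(b)}{\simeq}\mathbf{H}^{*}(\bar{X}^h,\Omega^{\bullet}_{\bar{X}^h}(\operatorname{log}D)\otimes\bar{\mathcal{V}}^h)\overset{(c)}{\simeq}\text{H}_{dR}^{*}(X^h,(\mathcal{V}^h,\nabla^h)).
\]
The first isomorphism holds because $j$ is affine, so $Rj_*=j_*$ on quasi-coherent sheaves, and $j_*$ of the algebraic complex is the complex of forms with arbitrary-order poles along $D$.

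Step (b) is GAGA, as reviewed in Section 2. Each term $\Omega^{p}_{\bar{X}}(\operatorname{log}D)\otimes\bar{\mathcal{V}}$ is coherent on the projective (or, via Chow's lemma, proper) $\bar{X}$, so GAGA identifies the sheaf cohomology term by term. Comparing the hypercohomology spectral sequences $E_1^{p,q}=\text{H}^q(\Omega^p(\operatorname{log}D)\otimes\bar{\mathcal{V}})$ then gives the isomorphism of hypercohomologies.

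Step (a) is a local algebraic computation. I would filter $\Omega^{\bullet}(*D)\otimes\bar{\mathcal{V}}$ by pole order and show that each graded piece beyond the logarithmic one is acyclic. In local coordinates $t_1,\dots,t_l$ cutting out $D$, the differential on the piece with poles $t_1^{-k}$, $k\ge1$, acts through $\operatorname{Res}_{D_1}(\bar{\nabla})-k$. This is invertible because no residue eigenvalue is a negative integer, which is exactly what the normalization in $[0,1)$ guarantees. This is the same mechanism as the quasi-isomorphisms $\mathcal{O}_X(D)/\mathcal{O}_X\simeq\Omega_X(2D)/\Omega_X(D)$ in the curve example above.

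Step (c) is analytic and is the main obstacle. On $\bar{X}^h$ one must show that $\Omega^{\bullet}_{\bar{X}^h}(\operatorname{log}D)\otimes\bar{\mathcal{V}}^h\to Rj_*(\Omega^{\bullet}_{X^h}\otimes\mathcal{V}^h)$ is a quasi-isomorphism; on $X^h$ the target is, by the holomorphic Poincaré lemma, the local system $\ker\nabla^h$. The question is local near a point of $D$, where $X^h$ looks like $(\Delta^*)^l\times\Delta^{n-l}$. There $Rj_*$ of a local system is computed by group cohomology of $\mathbb{Z}^l$ acting through commuting monodromies $T_i=\exp(-2\pi i\operatorname{Res}_{D_i})$. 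I would first reduce by Künneth to $l=1$, $n=1$. In that case I would compute the two-term complex $\bar{\mathcal{V}}_0\to\bar{\mathcal{V}}_0\,dt/t$ on power series, $f\mapsto(t\partial_t+R)f$. Its kernel and cokernel come only from the generalized eigenspace of $R$ with eigenvalue $0$, and they match $\ker(T-1)$ and $\operatorname{coker}(T-1)$. Convergence needs care here: the formal solutions converge because the logarithmic (regular-singular) form gives the polynomial-growth estimates of the definition. That convergence estimate is where regularity is genuinely used.
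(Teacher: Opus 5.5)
Your proposal is correct and follows the same route the paper outlines from Deligne without carrying it out: Hironaka compactification with normal crossings, a logarithmic extension of $(\mathcal{V},\nabla)$, GAGA for the logarithmic de Rham complex, then comparison back to $X$ on each side, with your steps (a) and (c) supplying the comparisons the paper leaves unstated. Small fixes: in (a) the operator $\operatorname{Res}_{D_1}(\bar{\nabla})-k$, $k\ge 1$, is invertible because no residue eigenvalue is a \emph{positive} integer (your $[0,1)$ normalization guarantees this); the reduction to $l=n=1$ is really a Koszul-complex argument for the commuting operators $t_i\partial_{t_i}+R_i$ rather than literal K\"unneth, since a local system on $(\Delta^*)^l$ need not be an external tensor product; and regularity enters only through the existence of an algebraic logarithmic extension (normalized algebraically, e.g.\ by elementary modifications along the $D_j$), not through a convergence estimate, which is trivial in the canonical frame where the residues $R_i$ are constant.
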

The map (59) not being an isomorphism is precisely the reason why the algebraic twisted cohomology (if it exists) cannot be directly related to the analytic twisted cohomology via the GAGA theorem. And Deligne's theorem seems to be a viable solution to this, which imposes the condition of logarithmic structure on the variety X. 
\section{Algebraic Twist}
We shall relax the abelian group structure condition for the moment. Instead we may write "if it exists" i.e. if the twisting parameter globally exists on any variety. We shall try to define the algebraic twisted cohomology in cases as general as possible. \\ 
 Let $X$ be an $S$-variety with a locally free sheaf $\mathcal{E}$ (of finite rank) with a family of (unique) morphisms of abelian sheaves 
 \begin{equation}
     \nabla^{i}: \mathcal{E}\otimes_{\mathcal{O}_{X}} \Omega^{i}_{X/S} \longrightarrow \mathcal{E}\otimes_{\mathcal{O}_{X}} \Omega^{i+1}_{X/S}
 \end{equation}
 such that 
 \begin{equation}
     \nabla^{i}(e \otimes \omega) = e \otimes d\omega + (-1)^{i}\nabla(e)\wedge \omega
 \end{equation}
 Assuming integrability of the connection, we can write the complex 
 \begin{equation}
     \mathcal{E}\otimes_{\mathcal{O}_{X}} \Omega^{\bullet}_{X/S}: 0\longrightarrow \mathcal{E} \xrightarrow{\nabla}\mathcal{E}\otimes_{\mathcal{O}_{X}} \Omega^{1}_{X/S}\xrightarrow{\nabla}\mathcal{E}\otimes_{\mathcal{O}_{X}} \Omega^{2}_{X/S}\longrightarrow \cdot \cdot \cdot
 \end{equation}
 And for the push-forward of the complex $\pi_{*}(\mathcal{E}\otimes_{\mathcal{O}_{X}} \Omega^{\bullet}_{X/S})$, the hypercohomology of which is the cohomology valued/with coefficients in the vector bundle $(\mathcal{E},\nabla)$
 \begin{equation}
     \operatorname{H}_{dR}^{n}(X/S, (\mathcal{E},\nabla)) := \textbf{R}^{n}\pi_{*}(\mathcal{E}\otimes_{\mathcal{O}_{X}} \Omega^{\bullet}_{X/S})
 \end{equation}
 \begin{definition}
 Suppose further that for each globally defined (if they exist) K\"{a}hler differentials on $X/S$, $\omega \in \Gamma(X/S,\Omega_{X/S}^{1})$ we have a trivial line bundle $\mathcal{L}$ whose \textbf{regular} connection is (symbolically) $\nabla = d + \omega:\mathcal{L}\longrightarrow  \mathcal{L}\otimes_{\mathcal{O}_{X}} \Omega_{X/S}^{1}$. Then we define the \textbf{algebraic Twisted }\footnote{or algebraic Morse Novikov cohomology} cohomology as 
 \begin{equation}
     \operatorname{H}_{\omega}^{n}(X/S) :=  \operatorname{H}^{n}(X/S, (\mathcal{L,\nabla})) = \textbf{R}^{n}\pi_{*}(\mathcal{L}\otimes_{\mathcal{O}_{X}} \Omega^{\bullet}_{X/S})
 \end{equation}
 \end{definition}
 Let us compute some simple examples in the following subsection.
 \subsubsection{Example : Affine case}
Let $X$ be a smooth affine algebraic variety over some algebraically closed field $k$ of characteristic zero and let $\omega\in\Omega_{X}^{1}$ be a closed K\"{a}hler differential i.e. $d\omega = 0$. The trivial line bundle over $X$ is an $\mathcal{O}_X$-module of rank 1: $\mathcal{L} = \mathcal{O}_{X}\cdot e$ (thus its sections look like $f\cdot e \in \mathcal{L}$, for any $f\in \mathcal{O}_X$). The connection is the following map 
\begin{equation}
    \nabla: \underbrace{\mathcal{L}}_{=\mathcal{O}_X \cdot e}\longrightarrow \mathcal{L}\otimes_{\mathcal{O}_X}\Omega_X ^{1}\simeq \Omega_{X}^{1}\cdot e
\end{equation}
such that $\nabla(e)= -\omega \otimes e$ (which we recall from section 4). Thus the connection is the twisted differential $d_{\omega}:= d + \omega\wedge$. The closedness of $d_\omega$ follows from the flatness of the line bundle and vice versa. Indeed, the cohomology valued in a flat line bundle is isomorphic to the twisted de Rham cohomology with a deformed de Rham chain complex in which the exterior derivative is deformed with a closed 1-form which is precisely the connection 1-form of the connection of the flat line bundle. For a globally defined 1-form, the line bundle must be trivialized.  \\
Recall what the algebraic de Rham cohomologies are for a punctured elliptic curve over $k$
\[
X = \operatorname{Spec} \frac{k[x,y]}{\langle y^2 - f(x)\rangle} = \operatorname{Spec}A
\]
\[
f(x) = 4x^3 - g_{2}x - g_3
\]
 such that $g_{2}^{3}-27g_{3}^{2}\neq 0$. The K\"{a}hler differentials are 
 \[
 \Omega^{1}_{X/k} = \frac{A\cdot dx \oplus A\cdot dy}{\langle d(y^2 - f(x)) \rangle} = [A \oplus \frac{1}{y}A]dx 
 \]
Indeed the sheaf $\Omega_{X/k}^{1}$ is trivialized by the section $\alpha = dx/y$.\footnote{which is closed as can be seen from $d(\frac{dx}{y}) = -\frac{dy\wedge dx}{y^2} = -\frac{f'(x)}{2y}dx \wedge dx = 0$} Then the algebraic de Rham cohomologies are 
\[
\operatorname{H}^{0}(X/k) = k
\]
\[ \operatorname{H}^{1}(X/k) = k[\alpha]\oplus k[x\alpha]
\]
We take this as the twisting parameter\footnote{ the \textit{twisted derivative} $d_{\omega}:= d + \omega\wedge$ is a deformation of the usual exterior derivative of the de Rham complex, here the closed 1-form $\omega$ is called a twisting parameter} i.e. $\omega = dx/y$. Thus the twisted de Rham  complex is 
\begin{equation}
    0\longrightarrow A\overset{d_{\omega}^{0}}{\longrightarrow}\Omega_{X/k}^{1}\overset{d_{\omega}^{1}}{\longrightarrow}0
\end{equation}
Hence the zeroth cohomology of this complex can be computed from the kernel of the first map while the first cohomology from the cokernel of this map:\\
\begin{enumerate}
    \item $\operatorname{H}_{\omega}^{0}(X/k) = \operatorname{Ker}(A\overset{d_{\omega}^{0}}{\longrightarrow}\Omega_{X/k}^{1})$:\\
    However, it's obvious that for any section $A$ to vanish algebraically uder the action of the twisted derivative ($df + f\omega =0\Rightarrow f=0$), it must be zero. If not, it must be a transcendental functions. Hence zero is the only element in this cohomology.
    \[
    \operatorname{H}_{\omega}^{0}(X/k)=0
    \]
    \item $\operatorname{H}_{\omega}^{1}(X/k) = \operatorname{Coker}(A\overset{d_{\omega}^{0}}{\longrightarrow}\Omega_{X/k}^{1}) = \Omega^{1}_{X/k}/\operatorname{Im}(d_{\omega}^{0})$:\\
    It turns out that the global 1-form $dx/y$ is not contained in the image of the twisted derivative $\operatorname{Im}(d_{\omega}^{0})$. Indeed, if for any section $f\in A$, if we assume that $df + f\omega = dx/y (=\omega)$, the section $f$ must be transcendental. So $\operatorname{Im}(d_{\omega}^{0})\simeq A\cdot dx$. Hence
    \[
    \operatorname{H}^{1}_{\omega}(X/k) = \frac{\Omega_{X/k}^{1}}{\operatorname{Im}(d_{\omega}^{0})} = \frac{[A\oplus \frac{1}{y}A]\cdot dx}{A\cdot dx} = k\cdot [\frac{dx}y]\simeq k
    \]
\end{enumerate} 

\textbf{Remarks}: Comparing with the algebraic de Rham, we see the algebraic consequence of this twisting procedure. In the zeroth cohomology, all the constants have vanished. While the first cohomology has been reduced from two dimensional to one dimensional.

\subsection{Logarithmic twist}
 Regularity condition on the connection implies that we can extend this cohomology to the \textbf{logarithmic de Rham complex} \cite{deligne2006equations} on the Hironaka compactification of the variety, $\overline{X}$ (the line bundle and the connection extend as well): 
 \begin{equation}
   \operatorname{H}_{\tilde{\omega}}^{n}(\overline{X}) :=   \operatorname{H}^{n}(\overline{X}, (\overline{\mathcal{L}},\overline{\nabla})) = \textbf{R}^{n}\pi_{*}(\overline{\mathcal{L}}\otimes_{\mathcal{O}_{\overline{X}}} \Omega^{\bullet}_{\overline{X}}(\operatorname{log} D))
 \end{equation}
 where the one form $\tilde{\omega}$ is logarithmic. As a consequence, we can use this logarithmically extended cohomology for computing the algebraic Morse Novikov cohomology since for any Hironaka compactification $\overline{X}$ of a variety $X$, the embedding 
 \begin{equation}
     j: X \hookrightarrow \overline{X}
 \end{equation}
induces a quasi-isomorphism of the corresponding chain complexes 
\begin{equation}
   \overline{\mathcal{L}} \otimes_{\mathcal{O}_{\overline{X}}} \Omega^{\bullet}_{\overline{X}}(\operatorname{log} D) \longrightarrow j_{*}(\mathcal{L} \otimes_{\mathcal{O}_{X}}\Omega_{X}^{\bullet})
\end{equation}
for which we have isomorphic cohomologies 
\begin{equation}
    \operatorname{H}^{k}_{\omega}(X/S) \simeq   \operatorname{H}_{\tilde{\omega}}^{n}(\overline{X})
\end{equation}
We stress the point that the Hironaka compactification may be applied to a non compact smooth open variety. If the variety $X$ already contains divisors with normal crossings at its boundary (i.e. its compact in the sense of Hironaka compactification), then
from the sequence (51), we may write 

\[
\begin{tikzcd}
  \vdots & \vdots & \vdots &\\
  0\longrightarrow\mathcal{L}\otimes_{\mathcal{O}_{X}}\Omega_{X}^{k+1}  \arrow{r}{} \arrow{u}{\nabla} & \mathcal{L}\otimes_{\mathcal{O}_{X}}\Omega_{X}^{k+1}(\operatorname{log}(D)) \arrow{r}{d} \arrow{u}{\nabla} & \mathcal{L}\otimes_{\mathcal{O}_{X}}\Omega_{D}^{k} \arrow{r}{} \arrow{u}{\nabla} & 0\\
  0\longrightarrow \mathcal{L}\otimes_{\mathcal{O}_{X}}\Omega_{X}^{k} \arrow{r}{} \arrow{u}{\nabla} & \mathcal{L}\otimes_{\mathcal{O}_{X}}\Omega_{X}^{k}(\operatorname{log}(D)) \arrow{r}{} \arrow{u}{\nabla} & \mathcal{L}\otimes_{\mathcal{O}_{X}}\Omega_{D}^{k-1} \arrow{r}{} \arrow{u}{\nabla} & 0\\
  0\longrightarrow  \mathcal{L}\otimes_{\mathcal{O}_{X}}\Omega_{X}^{k-1}  \arrow{r}{} \arrow{u}{\nabla} & \mathcal{L}\otimes_{\mathcal{O}_{X}}\Omega_{X}^{k-1} \arrow{r}{} \arrow{u}{\nabla}(\operatorname{log}(D)) & \mathcal{L}\otimes_{\mathcal{O}_{X}}\Omega_{D}^{k-2} \arrow{r}{} \arrow{u}{\nabla} & 0\\
  \vdots\arrow{u} & \vdots\arrow{u} & \vdots\arrow{u}
\end{tikzcd}
\]
As an example case, we can extend the discussion in section 3.1.1. The trivial line bundle (or the invertible sheaf) $\mathcal{L}$ is the structure sheaf $\mathcal{O}_X$ itself. So we have 
\begin{equation}
    \mathcal{L}\otimes_{\mathcal{O}_X}\Omega_X(\operatorname{log}D) \simeq \mathcal{O}_X\otimes_{\mathcal{O}_X}\Omega_X(\operatorname{log}D) \simeq \Omega_X(\operatorname{log}D)
\end{equation}
Hence just as in section 3.1.1, the cohomology is precisely the same.

\subsubsection{Example: Non-affine case} We examine the case of an elliptic curve over $\mathbb{C}$: $E= \mathbb{C}/\Lambda$, which is a projective variety, whose Hironaka compactification is understood from the divisor with normal crossings $D= \{p_1 =\cdot\cdot\cdot=p_{n} = 0 \}$ where $p_1,...,p_n$ are points from $E$. We take the twisting parameter in the present case to be a global section $\omega \in \operatorname{H}^{0}(E,\Omega_{E}^{1}(\operatorname{log}D))$, where the sheaf $\Omega_{E}^{1}(\operatorname{log}D)$ is computed from the short exact sequence (49).\\
The variety being projective, the structure sheaf $\mathcal{O}_{E}$ has constants as its global sections i.e. $\Gamma(E,\mathcal{O}_{E})\simeq \mathbb{C}$. The connection of this sheaf is 
\begin{equation}
    \nabla: \mathcal{O}_{E}\longrightarrow \mathcal{O}_{E}\otimes\Omega_{E}^{1}(\operatorname{log}D)\simeq \Omega_{E}^{1}(\operatorname{log}D) 
\end{equation}
So the zeroth cohomology of the two-term complex 
\begin{equation}
    \mathcal{O}_{E}\overset{\nabla}{\longrightarrow} \Omega_{E}^{1}(\operatorname{log}D)
\end{equation}
is zero: $\operatorname{H}_{log,\omega}^{0}(E) =0$ as can be seen from the fact that $\nabla(f) = f\omega$, since $f \in \mathbb{C}$.\\
For computational convenience, let us briefly review hypercohomology using spectral sequence for the specialized case of curves which relevant to our case.\\
Recall that for any curve $X$ the cochain complex is a two-term cohomology 
\[
\mathcal{K}^{\bullet}: K^{0}\overset{d}{\longrightarrow}K^{1}
\]
for which the cohomology sheaves are 
\[
\mathcal{H}^{0} = \operatorname{Ker(d:K^0 \longrightarrow K^1)}
\]
\[
\mathcal{H}^{1} = \operatorname{Coker(d:K^0 \longrightarrow K^1)}
\]
\[
\mathcal{H}^{q} = 0
\]
for $q\geq 2$. Then the hypercohomologies are 
\begin{equation}
    \mathbb{H}^{0}(X,\mathcal{K}^{\bullet}) = \operatorname{H}^{0}(X,\mathcal{H}^{0})
\end{equation}
\begin{equation}
    \mathbb{H}^{1}(X,\mathcal{K}^{\bullet}) = \operatorname{H}^{1}(X,\mathcal{H}^{0}) \oplus \operatorname{H}^{0}(X,\mathcal{H}^{1})
\end{equation}
\begin{equation}
    \mathbb{H}^{2}(X,\mathcal{K}^{\bullet}) = \operatorname{H}(X,\mathcal{H}^1)
\end{equation}
We can reinterpret them in terms of the spectral sequence page 
\begin{center}
\begin{tikzpicture}
  \draw (0,0) rectangle (4,4);
  \draw (2,0) -- (2,4);
  \draw (0,2) -- (4,2);

  \node at (1,3) {$E_{1}^{0,0} $};
  \node at (3,3) {$E_{1}^{0,1}$};
  \node at (1,1) {$E_{1}^{1,0}$};
  \node at (3,1) {$E_{1}^{1,1}$};
\end{tikzpicture}
\end{center}
where the terms are 
\begin{enumerate}
    \item $E_{1}^{0,0} = \operatorname{H}^{0}(X,K^{0})$
    \item $E_{1}^{0,1} = \operatorname{H}^{1}(X,K^{0})$
    \item $E_{1}^{1,0} = \operatorname{H}^{0}(X,K^{1})$
    \item $E_{1}^{1,1} = \operatorname{H}^{1}(X,K^{1})$ 
\end{enumerate}
The zeroth hypercohomology is 
\begin{equation}
    \mathbb{H}^{0}(X,\mathcal{K}^{\bullet}) = \operatorname{Ker}(E_{1}^{0,0}\longrightarrow E_{1}^{1,0}) = \operatorname{Ker}(\operatorname{H}^{0}(X,K^{0})\longrightarrow \operatorname{H}^{0}(X,K^1)) 
\end{equation}
The first hypercohomology can be computed from the totalized complex of the spectral sequence page 
 that consists of the terms 
 \[
 D^{0} = E_{1}^{0,0}
 \]
 \[
 D^{1}= E_{1}^{1,0}\oplus E_{1}^{0,1}
 \]
 \[
 D^2 = E_{1}^{1,1}
 \]
then
\begin{equation}
    \mathbb{H}^{1}(X,\mathcal{K}^{\bullet}) = \frac{\operatorname{Ker(D^1\longrightarrow D^2)}}{\operatorname{Im(D^0\longrightarrow D^1)}} = \frac{\operatorname{Ker(E_{1}^{1,0}\oplus E_{1}^{0,1}\longrightarrow E_{1}^{1,1})}}{\operatorname{Im(E_{1}^{0,0}\longrightarrow E_{1}^{1,0}\oplus E_{1}^{0,1})}}
\end{equation}
Yet another easier way to calculate it (for the case of curves) is from the following short exact sequence 
\begin{equation}
    0 \longrightarrow \operatorname{Coker}(E_{1}^{0,0}\longrightarrow E_{1}^{1,0}) \longrightarrow \mathbb{H}^{1}(X,\mathcal{K}^{\bullet})\longrightarrow E_{1}^{0,1}\longrightarrow 0
\end{equation}
hence from the splitting of this sequence 
\begin{equation}
    \mathbb{H}^{1}(X,\mathcal{K}^\bullet) = \operatorname{Coker}(E_{1}^{0,0}\longrightarrow E_{1}^{1,0}) \oplus E_{1}^{0,1}
\end{equation}
To see how this interpretation (80) is equivalent to the previously mentioned (79), we write the totalized complex
\begin{equation}
    D^{0}= E^{0,0} \overset{d^{0}= (d_{1},0)}{\longrightarrow} D^{1}= E_{1}^{1,0}\oplus E_{1}^{0,1}\overset{d^{1}= (0,d_{1}')}{\longrightarrow}D^{2}=E_{1}^{1,1}
\end{equation}
where 
\[
d_{1}: E_{1}^{0,0}\longrightarrow E_{1}^{1,0}
\]
which will be the connection $\nabla = d + \omega \wedge$ in our case and
\[
d_{1}': E_{1}^{0,1}\longrightarrow E_{1}^{1,1}
\]
Then 
\begin{equation}
    \operatorname{Ker}[d^{1}= (0,d_{1}')] = E_{1}^{1,0} \oplus \operatorname{Ker}(d_{1}':E_{1}^{0,1}\longrightarrow E_{1}^{1,1})
\end{equation}
\begin{equation}
\operatorname{Im}[d^{0}= (d_{1},0)] = \operatorname{Im}(d_{1}: E_{1}^{0,0}\longrightarrow E_{1}^{1,0}) \oplus 0
\end{equation}
In general, if the kernel $\operatorname{Ker[d_{1}': E_{1}^{0,1} \longrightarrow E_{1}^{1,1}]}$ is non-trivial, it should be interpreted as a term in the next page of the spectral sequence that is, $\operatorname{Ker[d_{1}': E_{1}^{0,1} \longrightarrow E_{1}^{1,1}]} = E_{2}^{0,1}$.\footnote{Since \[E_{2}^{p,q} = \frac{\operatorname{Ker}[d: E_{1}^{p,q}\longrightarrow E_{1}^{p+1,q}]}{\operatorname{Im}[d:E_{1}^{p-1,q}\longrightarrow E_{1}^{p,q}]}\]}\\

The first hypercohomology is then the cohomology at $D^1$ of the totalized complex
\begin{equation}
    \mathbb{H}^{1}(X,\mathcal{K}^{\bullet}) = \operatorname{H}^{1}(X,D^{\bullet}) = \frac{\operatorname{Ker}(d^{1})}{\operatorname{Im}(d^{0})}
\end{equation}
\[
= \frac{E_{1}^{1,0}\oplus \operatorname{Ker}[d_{1}':E_{1}^{0,1}\longrightarrow E_{1}^{1,1}]}{\operatorname{Im}[d_{1}:E_{1}^{0,0}\longrightarrow E_{1}^{1,0}]}
\]
\[
\simeq \frac{E_{1}^{1,0}}{\operatorname{Im}[d_{1}:E_{1}^{0,0}\longrightarrow E_{1}^{1,0}]} \oplus \operatorname{Ker}[d_{1}':E_{1}^{0,1}\longrightarrow E_{1}^{1,1}]
\]
\[
=\operatorname{Coker}[d_{1}:E_{1}^{0,0}\longrightarrow E_{1}^{1,0}] \oplus E_{2}^{0,1}
\]
For the equivalence between (79) and (80), it must be assumed that the map $d_{1}':E_{1}^{0,1}\longrightarrow E_{1}^{1,1}$ is a zero map so that the kernel of this map is the entire $E_{1}^{0,1}$. Then we get from (88)
\begin{equation}
    \mathbb{H}^{1}(X,\mathcal{K}^{\bullet}) = \operatorname{Coker}[d_{1}:E_{1}^{0,0}\longrightarrow E_{1}^{1,0}] \oplus E_{1}^{0,1}
\end{equation}
from which we can write the short exact sequence (80).
\\

Let us get back to our example computation. For us, the relevant spectral sequence page is 
\begin{center}
\begin{tikzpicture}
  \draw (0,0) rectangle (6,6);
  \draw (3,0) -- (3,6);
  \draw (0,3) -- (6,3);

  \node at (1.5,4.5) {$\operatorname{H}^{0}(E,\mathcal{O}_{E}) $};
  \node at (4.5,4.5) {$\operatorname{H}^{1}(E,\mathcal{O}_{E})$};
  \node at (1.5,1.3) {$\operatorname{H}^{0}(E,\Omega_{E}^{1}(\operatorname{log}D))$};
  \node at (4.5,1.3) {$\operatorname{H}^{1}(E,\Omega_{E}^{1}(\operatorname{log}D)$};
\end{tikzpicture}
\end{center}
We see that the zeroth cohomology is 
\begin{equation}
      \mathbb{H}^{0}(X,\mathcal{K}^{\bullet}) = \operatorname{Ker}(\operatorname{H}^{0}(E,\mathcal{O}_E)\overset{\nabla}{\longrightarrow}\operatorname{H}^{0}(E,\Omega_{E}^{1}(\operatorname{log}D))) = \operatorname{H}_{log,\omega}^{0}(E)
\end{equation}
which is zero, as was argued above too, because of the fact that $\operatorname{H}^{0}(E,\mathcal{O}_E)\simeq \mathbb{C}$.\\
The assumption that the map $d_{1}': E_{1}^{0,1}\longrightarrow E_{1}^{1,1}$ being a zero map can be made for our case using Deligne's logarithmic Hodge-to-de Rham degeneration of the spectral sequeuce. So the map $\operatorname{H}^{1}(E,\mathcal{O}_{E})\longrightarrow \operatorname{H}^{1}(E,\Omega_{E}^{1}(\operatorname{log}D))$ is a zero map and hence we can use (89) to compute 
\begin{equation}
    \mathbb{\operatorname{H}}^{1}_{log,\omega}(E) = \operatorname{coker}(\operatorname{H}^{0}(E,\mathcal{O}_{E})\longrightarrow \operatorname{H}^{0}(E,\Omega_{E}^{1}(\operatorname{log}D))) \oplus \operatorname{H}^{1}(E,\mathcal{O}_{E})
\end{equation} 
Now we may show that for two isomorphic algebraic twisting parameters, the algebraic twisted cohomologies are isomorphic. The idea is to "\textit{algebraize}" the \textbf{proof 2} of theorem 1. Just like  the first proof of that theorem, it is easier to proof this for the algebraic case locally. Since our algebraic varieties are projective, we may first look at their affine patches. Take any affine (open in the Zariski topology) $U = \operatorname{Spec} A \subset{X}$, we can take $g\in A^{\times}$. Then the algebraic derivative acting as 
\[
d(g\cdot g^{-1}) = d(g)\cdot g^{-1} + g\cdot d(g^{-1}) = 0 
\] 
\[ \Rightarrow d(g^{-1}) = -g^{-2}d(g) \] 
We recall that logarithmic structure on a variety $X$ is characterized by defining sheafification of the presheaf of monoids, $\mathcal{M}_X$ on the \'{e}tale or Zariski site of $X$. In addition, there is a sheaf homomorphism 
\begin{equation}
    \alpha: \mathcal{M}_{X}\longrightarrow \mathcal{O}_{X}
\end{equation}
where we think of the structure sheaf $\mathcal{O}_{X}$ as a sheaf of multiplicative monoids, such that this map reduces to a sheaf isomorphism $\alpha: \mathcal{M}_{X}^{\times}\longrightarrow \mathcal{O}_{X}^{\times} $ under the reduction $\mathcal{M}_{X}^{\times}\subset{\mathcal{M}_{X}},\mathcal{O}_{X}^{\times}\subset \mathcal{O}_{X}$. 
We have the following algebraic analog 
\begin{theorem}
    On the logarithmic (be it abelian or not) variety $(X,\mathcal{M}_{X})$, for two cohomologous logarithmic algebraic 1-form 
    \[
    \psi_{2} = \psi_{1} + \frac{dg}{g}
    \]
    for any $g\in\mathcal{O}_{X}^{\times}$, the twisted cohomologies are (locally) isomorphic. 
\end{theorem}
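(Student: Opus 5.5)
The plan is to algebraize \textbf{Proof 2} of Theorem 1. The exponential $e^{-r}$ is replaced by the unit $g$ itself, which is a legitimate algebraic object: $g$ lies in $\mathcal{O}_X^{\times}(U)$, equivalently in $\mathcal{M}_X^{\times}(U)$ via $\alpha$. We work on an affine patch $U=\operatorname{Spec}A$ on which $g\in A^{\times}$. The twisted differentials $d_{\psi_i}=d+\psi_i\wedge$ act on the logarithmic complex $\Omega^{\bullet}_{U}(\operatorname{log}D)$, which is the complex $\mathcal{L}\otimes\Omega^{\bullet}(\operatorname{log}D)$ with $\mathcal{L}=\mathcal{O}_X$ trivial, as in Section 4.2. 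The first observation is that $dg/g$ is itself a closed logarithmic $1$-form: it is regular on $U$ because $g$ is a unit, and it is closed because $d(g^{-1}dg)=-g^{-2}dg\wedge dg=0$. Hence $\psi_2$ is again a valid twisting parameter, and $d_{\psi_2}^2=0$ holds whenever $d_{\psi_1}^2=0$.

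The key step is to define the multiplication map $T_g:\Omega^{k}_U(\operatorname{log}D)\to\Omega^{k}_U(\operatorname{log}D)$ by $\theta\mapsto g^{-1}\theta$ and to check that it intertwines the two twisted differentials. Using the Leibniz rule and the identity $d(g^{-1})=-g^{-2}dg$ recorded just before the statement, we compute
\[
d_{\psi_2}(g^{-1}\theta)=-g^{-1}\tfrac{dg}{g}\wedge\theta+g^{-1}d\theta+g^{-1}\bigl(\psi_1+\tfrac{dg}{g}\bigr)\wedge\theta=g^{-1}d_{\psi_1}\theta .
\]
Thus $T_g\circ d_{\psi_1}=d_{\psi_2}\circ T_g$, so $T_g$ is a morphism of complexes of $\mathcal{O}_U$-modules. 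Multiplication by a unit preserves the sheaf $\Omega^{k}(\operatorname{log}D)$, since that sheaf is an $\mathcal{O}_X$-module. Multiplication by $g$ is therefore an inverse chain map, and no nonvanishing hypothesis has to be imposed by hand, unlike in the analytic proof. It follows that $T_g$ is an isomorphism of complexes, and so it induces isomorphisms of the twisted cohomology groups (and of hypercohomology) on $U$. In the line-bundle language, $T_g$ is the isomorphism of flat line bundles $(\mathcal{O}_U,d+\psi_1)\cong(\mathcal{O}_U,d+\psi_2)$ given by the gauge transformation $g$.

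The main subtlety is the word ``locally'' and the passage beyond a single patch. If $g$ is a global section of $\mathcal{O}_X^{\times}$, the maps $T_g$ on the various affine patches agree on overlaps, so they glue to an isomorphism of complexes of sheaves on $X$. Applying $\mathbf{R}^n\pi_*$ then gives a global isomorphism $\operatorname{H}^n_{\psi_1}\simeq\operatorname{H}^n_{\psi_2}$. On a projective $X$, however, a global unit is constant, and then $dg/g=0$, so the interesting case is only the local one. I would therefore state and prove the result on each $U$. I would also remark that when the units $g_U$ form a Čech cocycle, the local isomorphisms $T_{g_U}$ do not glue; instead they identify the $\psi_2$-twisted complex with the complex twisted by $\psi_1$ with coefficients in a degree-zero line bundle. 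I expect this gluing and compatibility issue to be the only real obstacle, and I would address it with the Čech description of $\operatorname{Pic}^0$ recalled in Section 2.
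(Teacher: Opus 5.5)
Your proposal is correct and is essentially the paper's proof. The paper uses the same map $T\colon \eta\mapsto g^{-1}\eta$, checks $d_{\psi_2}\circ T=T\circ d_{\psi_1}$ by the identical Leibniz computation with $d(g^{-1})=-g^{-2}dg$, and concludes that the twisted cohomologies are isomorphic wherever $g$ is a unit. Your additions (closedness of $dg/g$, the explicit inverse given by multiplication by $g$, and the gluing discussion) only make explicit what the paper leaves implicit. One small correction to your last remark: it is the locally constant ratios $g_U/g_V$, not the $g_U$ themselves, that form the \v{C}ech cocycle.
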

\textit{Proof :} Define the algebraic map 
\[
T : \eta \mapsto g^{-1}\cdot\eta
\]
Then we may compute 
\begin{align*}
    d_{\psi_2} (T(\eta)) &= (d + \psi_2\wedge)(g^{-1}\cdot\eta)\\
    &= d(g^{-1}\cdot\eta) + \psi_2 \wedge (g^{-1}\cdot\eta)\\
    &= -g^{-1}\frac{dg}{g}\wedge\eta + g^{-1}d\eta + \psi_{2}\wedge g^{-1}\cdot\eta\\
    &= g^{-1}\cdot(d\eta + [\psi_2 - \frac{dg}{g}]\wedge\eta)\\
    &= g^{-1}\cdot(d\eta + \psi_{1}\wedge\eta)
\end{align*} 
And thus \[  d_{\psi_2} (T(\eta)) = T (d_{\psi_1}\eta)
\]
and $T$ is an algebraic chain map. Then for locally defined $g\in \mathcal{O}_{X}^{\times}$, the algebraic twisted cohomologies are isomorphic.\textbf{QED}.\\
\section{Concluding remarks} We performed an elementary study and perhaps a different (than \cite{kita1994vanishing}) way of viewing twisted algebraic cohomologies on algebraic varieties. Our study was inspired by the perspectives presented in \cite{meng2018explicit,meng2020morse,meng2023morse}. We have not included many applications of such cohomologies. Instead, we have tried to motivate its definition and have included easy computations. We have made comparisons between analytic and algebraic twisting and the corresponding twisting parameters.
\bibliographystyle{abbrv}
\bibliography{refer}
\end{document}